\newcommand{\DS}[1]{{\displaystyle #1}}
\newcommand{\nbN}{\mathbbm{N}}
\newcommand{\nbP}{\mathbbm{P}}
\newcommand{\nbE}{\mathbbm{E}}
\newcommand{\ten}{\longrightarrow}
\renewcommand{\Pr}{\nbP}
\renewcommand{\phi}{\varphi}
\newtheorem{theorem}{{\bf Theorem}}
\newtheorem{lemm}[theorem]{{\bf Lemma}}
\newtheorem{cor}[theorem]{{\bf Corollary}}
\newtheorem*{conj}{Conjecture}
\def\QED{\mbox{\rule[0pt]{1.5ex}{1.5ex}} \vspace{0.2cm}}
\def\cqfd{\hspace*{\fill}~\QED\par\endtrivlist\unskip}
\begin{document}

\begin{abstract}
We study in this paper a generalized coupon collector problem, which consists
in determining the distribution and the moments of the time needed to collect
a given number of distinct coupons that are drawn from a set of coupons
with an arbitrary probability distribution. We suppose that a special coupon
called the null coupon can be drawn but never belongs to any collection.
In this context, we obtain expressions of the distribution and the moments
of this time. We also prove that the almost-uniform distribution, for which all 
the non-null coupons have the same drawing probability, is the distribution
which minimizes the expected time to get a fixed subset of distinct coupons.
This optimization result is extended to the complementary distribution 
of that time when the full collection is considered, proving
by the way this well-known conjecture. Finally, we propose a new 
conjecture which expresses the fact that the almost-uniform distribution should 
minimize the complementary distribution of the time needed to get any fixed 
number of distinct coupons. 

{\it Keywords:} Coupon collector problem; Minimization; Markov chains
\end{abstract}

\title{New results on a generalized coupon collector problem
       using Markov chains}
\author{Emmanuelle Anceaume, Yann Busnel and Bruno Sericola} 

\maketitle

\section{Introduction}
The coupon collector problem is an old problem which consists in evaluating
the time needed to get a collection 
of different objects drawn randomly
using a given probability distribution.  
This problem has given rise to a lot of attention
from researchers in various fields since it has applications in many 
scientific domains including computer science and optimization.

More formally, consider a set of $n$ coupons which are drawn randomly one by 
one, with replacement, coupon $i$ being drawn with probability $p_i$.
The classical coupon collector problem is         
to determine the expectation or the distribution of the number 
of coupons that need to be drawn from the set of $n$ coupons
to obtain the full collection of the $n$ coupons.
A large number of papers have been devoted to the analysis of asymptotics
and limit distributions of this distribution when $n$ tends to infinity,
see \cite{Doumas12} or \cite{Neal08} and the references therein. 
In \cite{Brown08}, the authors obtain some new formulas concerning this
distribution and they also provide simulation techniques to compute it as
well as analytic bounds of it.

We consider in this paper several generalizations of this problem.
A first generalization is the analysis, for $c \leq n$, of the number $T_{c,n}$ 
of coupons that need to be drawn, with replacement, to collect $c$ different
coupons
from set $\{1,2,\ldots,n\}$. With this notation, the number 
of coupons that need to be drawn from this set to obtain the full collection is
$T_{n,n}$.
If a coupon is drawn
at each discrete time $1,2,\ldots$ then $T_{c,n}$ is the time 
needed to obtain $c$ different coupons also called the waiting time to
obtain $c$ different coupons. This problem has been considered in 
\cite{Rubin65} in the case where the drawing probability distribution is 
uniform.

In a second generalization, we assume that $p=(p_1,\ldots,p_n)$ is not
necessarily a probability distribution, \emph{i.e.}, we suppose that
$p_1 + \cdots + p_n \leq 1$ and we define $p_0 = 1 - (p_1 + \cdots + p_n)$.
This means that there is a null coupon, denoted by $0$,
which is drawn with probability $p_0$, but which does not belong 
to the collection. 
In this context, the problem is to determine the distribution of the 
number $T_{c,n}$ of coupons that need to
be drawn from set $\{0,1,2,\ldots,n\}$, with replacement, till one first
obtains a collection composed of $c$ different coupons, $1 \leq c \leq n$,
among $\{1,\ldots,n\}$. 
These generalizations are motivated by the analysis of streaming algorithms 
in network monitoring applications as presented in 
Section~\ref{sec:application}.
 
The distribution of $T_{c,n}$ is obtained using Markov chains in Section 2,
in which we moreover
show that this distribution leads to new combinatorial identities.
This result is used to get an expression of $T_{c,n}(v)$ when the drawing 
distribution is the almost-uniform distribution denoted by $v$ and defined
by $v=(v_1,\ldots,v_n)$ with $v_i = (1-v_0)/n$, where 
$v_0 = 1 -(v_1 + \cdots +v_n)$.
Expressions of the moments of $T_{c,n}(p)$ are given in 
Section 3, where we show that the limit of $\nbE(T_{c,n}(p))$ is equal to $c$
when $n$ tends to infinity.
We show in Section 4 that the almost-uniform distribution $v$ and 
the uniform distribution $u$
minimize the expected value $\nbE(T_{c,n}(p))$.
We prove in Section 5 that 
the tail distribution of $T_{n,n}$ is minimized over all the $p_1,\ldots,p_n$
by the almost-uniform distribution and by the uniform distribution. 
This result was expressed as a conjecture
in the case where $p_0=0$, \emph{i.e.}, when $p_1 + \cdots + p_n = 1$, 
in several papers like \cite{Boneh97} for instance, from which the idea of 
the proof comes from.
We propose in Section 6 a new conjecture which consists in showing that
the distributions $v$ and $u$ minimize the tail distribution of 
$T_{c,n}(p)$. 
This conjecture is motivated by the fact that it is true 
for $c=1$ and $c=n$ as shown in Section 5, and we show that 
it is also true for $c=2$. It is moreover true for
the expected value $\nbE(T_{c,n}(p))$ as shown in Section 4.

\section{Distribution of $T_{c,n}$}
Recall that $T_{c,n}$ is the number of coupons that need to
be drawn from set $\{0,1,2,\ldots,n\}$, with replacement, till one first
obtains a collection with $c$ different coupons, $1 \leq c \leq n$,
among $\{1,\ldots,n\}$, where coupon $i$ is drawn with probability $p_i$,
$i=0,1,\ldots,n$.

To obtain the distribution of $T_{c,n}$, we consider the discrete-time 
Markov chain 
$X = \{X_m, \; m \geq 0\}$ that represents the collection obtained after
having drawn $m$ coupons. The state space of $X$ is  
$$S_n = \left\{J \subseteq \{1,\ldots,n\}\right\}$$
with transition probability matrix, denoted by $Q$, given,
for every $J,H \in S_n$, by
$$Q_{J,H} = \left\{\begin{array}{cl}
            p_\ell & \mbox{if } H \setminus J = \{\ell\} \\
            p_0 + P_J & \mbox{if } J=H \\
            0 & \mbox{otherwise}, \\  
\end{array}\right.$$
where, for every $J \in S_n$, $P_J$ is given by 
\begin{equation} \label{PJ}
P_J = \sum_{j \in J} p_j, 
\end{equation}
with $P_\emptyset = 0$.
It is easily checked that Markov chain $X$ is acyclic, \emph{i.e.}, 
it has no cycle
of length greater than $1$, and that all the states are transient, except
state $\{1,\ldots,n\}$ which is absorbing.
We introduce the partition $(S_{0,n},S_{1,n}, \ldots, S_{n,n})$ of $S_n$, 
where $S_{i,n}$ is defined, for $i=0,\ldots,n$, by 
\begin{equation} \label{Si}
S_{i,n} = \left\{J \subseteq \{1,\ldots,n\} \mid |J| = i \right\}.
\end{equation}
Note that we have $S_{0,n} = \{\emptyset\}$,
$$|S_n| = 2^n \mbox{ and } |S_{i,n}| = {n \choose i}.$$
Assuming that $X_0 = \emptyset$ with probability $1$, the random variable 
$T_{c,n}$ can then be defined, for every $c=1, \ldots,n$, by
$$T_{c,n} = \inf\{m \geq 0 \mid X_m \in S_{c,n}\}.$$
The distribution of $T_{c,n}$ is obtained in Theorem \ref{theo1}
using the Markov property and the following lemma.

For every $n \geq 1$, $\ell = 1,\ldots,n$ and $i=0,\ldots,n$, 
we define the set $S_{i,n}(\ell)$ by
$$S_{i,n}(\ell) = \left\{J \subseteq \{1,\ldots,n\}\setminus\{\ell\} 
\mid |J| = i\right\}.$$
\begin{lemm} \label{lemme1}
For every $n \geq 1$, for every $k \geq 0$, 
for all positive real numbers $y_1,\ldots,y_n$,
for every $i = 1,\ldots,n$ and all real number $a \geq 0$, we have
$$\sum_{\ell = 1}^n y_\ell
\sum_{J \in S_{i-1,n}(\ell)} (a + y_\ell + Y_J)^{k}
= \sum_{J \in S_{i,n}} Y_J(a + Y_J)^{k},$$
where $Y_J= \sum_{j \in J} y_j$ and $Y_\emptyset = 0$. 
\end{lemm}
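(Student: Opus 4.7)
The plan is to prove this identity purely by reindexing the double sum on the left-hand side. No probabilistic or analytic input is needed; it is a bare combinatorial identity valid for arbitrary nonnegative $y_1, \ldots, y_n$ and $a$, and any $k$ for which the powers make sense.

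First I would observe that the pairs $(\ell, J)$ ranging over the outer and inner sums on the left are exactly those with $\ell \in \{1, \ldots, n\}$, $J \subseteq \{1, \ldots, n\} \setminus \{\ell\}$ and $|J| = i-1$. The natural map $(\ell, J) \mapsto (H, \ell)$ with $H = J \cup \{\ell\}$ is a bijection onto pairs $(H, \ell)$ such that $H \in S_{i,n}$ and $\ell \in H$: every $i$-element subset $H$ arises as $J \cup \{\ell\}$ for exactly $i$ choices of distinguished element $\ell \in H$, and $J = H \setminus \{\ell\}$ recovers the original pair.

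Under this bijection, $y_\ell + Y_J = Y_H$, so the summand $y_\ell (a + y_\ell + Y_J)^k$ becomes $y_\ell (a + Y_H)^k$. Swapping the order of summation then yields
$$\sum_{\ell = 1}^n y_\ell \sum_{J \in S_{i-1,n}(\ell)} (a + y_\ell + Y_J)^{k} = \sum_{H \in S_{i,n}} (a + Y_H)^k \sum_{\ell \in H} y_\ell = \sum_{H \in S_{i,n}} Y_H (a + Y_H)^k,$$
where the last equality uses the definition $Y_H = \sum_{\ell \in H} y_\ell$. This is exactly the right-hand side.

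There is no real obstacle here: the only step is recognizing the bijection between $(\ell, J)$ with $\ell \notin J$ and $(H, \ell)$ with $\ell \in H$, which is the standard way to move a distinguished element in or out of a subset. The positivity of $a$ and the $y_\ell$ plays no role in the algebraic manipulation itself; it merely ensures that the bases $a + y_\ell + Y_J$ are nonnegative, which is relevant only if one wishes to apply the lemma with non-integer exponents $k$.
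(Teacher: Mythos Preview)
Your proof is correct and considerably simpler than the paper's. The paper proceeds by induction on $n$: it splits off the $\ell=n$ term, partitions each $S_{i-1,n}(\ell)$ according to whether $n$ lies in $J$, applies the inductive hypothesis twice (once with the constant $a$ and once with $a+y_n$), and then reassembles the pieces by partitioning $S_{i,n}$ according to whether $n$ lies in the subset. Your argument bypasses all of this with a single reindexing via the bijection $(\ell,J)\leftrightarrow(H,\ell)$, $H=J\cup\{\ell\}$, which immediately collapses the inner sum $\sum_{\ell\in H}y_\ell$ to $Y_H$. Both reach the same conclusion, but your route makes transparent that the identity is nothing more than a double-counting of pointed $i$-subsets, whereas the inductive proof somewhat obscures this; your observation that positivity of $a$ and the $y_\ell$ is irrelevant to the algebra is also accurate.
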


\begin{proof}
For $n=1$, since $S_{0,1}(1) = \emptyset$, the left hand side is
equal to $y_1 (a + y_1)^k$ and since $S_{1,1} = \{1\}$, 
the right hand side is also equal to $y_1 (a + y_1)^k$.
Suppose that the result is true for integer $n-1$, with $n \geq 2$, 
\emph{i.e.}, suppose that
for every $k \geq 0$, 
for all positive real numbers $y_1,\ldots,y_{n-1}$,
for every $i = 1,\ldots,n-1$ and for all real number $a \geq 0$, we have
$$\sum_{\ell = 1}^{n-1} y_\ell
\sum_{J \in S_{i-1,n-1}(\ell)} (a + y_\ell + Y_J)^{k}
= \sum_{J \in S_{i,n-1}} Y_J(a + Y_J)^{k}.$$
We then have
$$\sum_{\ell = 1}^{n} y_\ell
\sum_{J \in S_{i-1,n}(\ell)} (a + y_\ell + Y_J)^{k}
= \sum_{\ell = 1}^{n-1} y_\ell
\sum_{J \in S_{i-1,n}(\ell)} (a + y_\ell + Y_J)^{k} +
y_n \sum_{J \in S_{i-1,n}(n)} (a + y_n + Y_J)^{k}.$$
Since $S_{i-1,n}(n) = S_{i-1,n-1}$, we get
$$\sum_{\ell = 1}^{n} y_\ell
\sum_{J \in S_{i-1,n}(\ell)} (a + y_\ell + Y_J)^{k}
= \sum_{\ell = 1}^{n-1} y_\ell
\sum_{J \in S_{i-1,n}(\ell)} (a + y_\ell + Y_J)^{k} +
y_n \sum_{J \in S_{i-1,n-1}} (a + y_n + Y_J)^{k}.$$
For $\ell = 1,\ldots,n-1$, the set $S_{i-1,n}(\ell)$ can be partitioned 
into two subsets $S'_{i-1,n}(\ell)$ and $S''_{i-1,n}(\ell)$
defined by
$$S'_{i-1,n}(\ell) = \left\{J \subseteq \{1,\ldots,n\}\setminus\{\ell\} 
\mid |J| = i-1 \mbox{ and } n \in J \right\}$$
and
$$S''_{i-1,n}(\ell) = \left\{J \subseteq \{1,\ldots,n\}\setminus\{\ell\} 
\mid |J| = i-1 \mbox{ and } n \notin J \right\}.$$
Since $S''_{i-1,n}(\ell) = S_{i-1,n-1}(\ell)$, the previous relation becomes
\begin{align*}
& \sum_{\ell = 1}^{n} y_\ell
\sum_{J \in S_{i-1,n}(\ell)} (a + y_\ell + Y_J)^{k} \\
& = \sum_{\ell = 1}^{n-1} y_\ell
\left[\sum_{J \in S_{i-1,n-1}(\ell)} (a + y_\ell + Y_J)^{k} +
\sum_{J \in S'_{i-1,n}(\ell)} (a + y_\ell + Y_J)^{k}\right] +
y_n \sum_{J \in S_{i-1,n-1}} (a + y_n + Y_J)^{k} \\
& = \sum_{\ell = 1}^{n-1} y_\ell
\sum_{J \in S_{i-1,n-1}(\ell)} (a + y_\ell + Y_J)^{k} +
\sum_{\ell = 1}^{n-1} y_\ell
\sum_{J \in S_{i-2,n-1}(\ell)} (a+y_n + y_\ell + Y_J)^{k} +
y_n \sum_{J \in S_{i-1,n-1}} (a + y_n + Y_J)^{k}.
\end{align*}
The recurrence hypothesis can be applied for both the first and
the second terms. For the second term, the constant $a$ is replaced by the 
constant $a+y_n$.
We thus obtain
\begin{align*}
\sum_{\ell = 1}^{n} y_\ell
\sum_{J \in S_{i-1,n}(\ell)} & (a + y_\ell + Y_J)^{k} \\
& = \sum_{J \in S_{i,n-1}} Y_J(a + Y_J)^{k} +
\sum_{J \in S_{i-1,n-1}} Y_J(a + y_n + Y_J)^{k} +
y_n \sum_{J \in S_{i-1,n-1}} (a + y_n + Y_J)^{k} \\
& = \sum_{J \in S_{i,n-1}} Y_J(a + Y_J)^{k} +
\sum_{J \in S_{i-1,n-1}} (y_n + Y_J)(a + y_n + Y_J)^{k} \\
& = \sum_{J \in S_{i,n-1}} Y_J(a + Y_J)^{k} +
\sum_{J \in S'_{i,n}} Y_J(a + Y_J)^{k},
\end{align*}
where $S'_{i,n} = \left\{J \subseteq \{1,\ldots,n\} 
\mid |J| = i \mbox{ and } n \in J \right\}$.

Consider the set $S''_{i,n} = \left\{J \subseteq \{1,\ldots,n\} 
\mid |J| = i \mbox{ and } n \notin J \right\}$.
The sets $S'_{i,n}$ and $S''_{i,n}$ form a partition of $S_{i,n}$ and since
$S''_{i,n} = S_{i,n-1}$, we get
\begin{eqnarray*}
\sum_{\ell = 1}^{n} y_\ell
\sum_{J \in S_{i-1,n}(\ell)} (a + y_\ell + Y_J)^{k}
& = & \sum_{J \in S_{i,n-1}} Y_J(a + Y_J)^{k} +
\sum_{J \in S'_{i,n}} Y_J(a + Y_J)^{k} \\
& = & \sum_{J \in S''_{i,n}} Y_J(a + Y_J)^{k} +
\sum_{J \in S'_{i,n}} Y_J(a + Y_J)^{k} \\
& = & \sum_{J \in S_{i,n}} Y_J(a + Y_J)^{k},
\end{eqnarray*}
which completes the proof.
\end{proof}
In the following we will use the fact that the distribution of $T_{c,n}$
depends on the vector $p=(p_1,\ldots,p_n)$, so we will use the notation
$T_{c,n}(p)$ instead of $T_{c,n}$, meaning by the way that vector $p$ is 
of dimension $n$. We will also use the notation 
$$p_0 = 1 - \sum_{i=1}^n p_i.$$ 
Finally, for $\ell = 1,\ldots,n$, the notation $p^{(\ell)}$ will denote
the vector $p$ in which the entry $p_\ell$ has been removed, that is
$p^{(\ell)} = (p_i)_{1 \leq i \leq n, i \neq \ell}$. The dimension
of $p^{(\ell)}$, which is $n-1$ here, is not specified but will be clear 
by the context of its use.
We are now able to prove the following result.

\begin{theorem} \label{theo1}
For every $n \geq 1$ and $c=1,\ldots,n$,
we have, for every $k \geq 0$,
\begin{equation} \label{loiTc}
\Pr\{T_{c,n}(p) > k\} = \sum_{i=0}^{c-1} (-1)^{c-1-i} {n-i-1 \choose n-c}
\sum_{J \in S_{i,n}} (p_0 + P_J)^k,
\end{equation}
where $P_J$ is given by (\ref{PJ}).
\end{theorem}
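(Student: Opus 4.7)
The plan is to work directly with the Markov chain $X$. Since $\{T_{c,n}(p) > k\}$ is exactly the event $\{X_k \in S_{0,n}\cup\cdots\cup S_{c-1,n}\}$, I would write
$$\Pr\{T_{c,n}(p) > k\} = \sum_{i=0}^{c-1} \sum_{J \in S_{i,n}} \Pr\{X_k = J\}.$$
Once a clean expression for $\Pr\{X_k = J\}$ is available, the stated formula should drop out by interchanging sums and invoking one well-known binomial identity.

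The key computation is that of $\Pr\{X_k = J\}$ for $J \in S_{i,n}$. The event $X_k = J$ requires that every one of the $k$ draws lands in $J \cup \{0\}$ and that every coupon of $J$ appears at least once. Applying inclusion-exclusion on the subset $H \subseteq J$ of coupons actually drawn (equivalently, on which coupons of $J$ are missed) and using that ``all draws fall in $H \cup \{0\}$'' has probability $(p_0 + P_H)^k$ yields
$$\Pr\{X_k = J\} = \sum_{H \subseteq J} (-1)^{|J|-|H|}(p_0 + P_H)^k.$$

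Substituting this into the decomposition and interchanging sums, I would then fix $H \in S_{j,n}$ and count the number of supersets $J \in S_{i,n}$ containing $H$, which is $\binom{n-j}{i-j}$. This gives
$$\Pr\{T_{c,n}(p) > k\} = \sum_{j=0}^{c-1} \sum_{H \in S_{j,n}}(p_0+P_H)^k \sum_{m=0}^{c-1-j}(-1)^m \binom{n-j}{m}.$$
The inner sum collapses via the elementary identity $\sum_{m=0}^{t}(-1)^m\binom{N}{m} = (-1)^t\binom{N-1}{t}$ (a one-line induction using Pascal's rule) applied with $N = n-j$ and $t = c-1-j$, to $(-1)^{c-1-j}\binom{n-j-1}{n-c}$ after noting $\binom{n-j-1}{c-1-j} = \binom{n-j-1}{n-c}$. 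Renaming $j$ as $i$ yields exactly (\ref{loiTc}); the case $k=0$ is covered automatically, since the inclusion-exclusion formula forces $\Pr\{X_0=J\}$ to be $1$ for $J=\emptyset$ and $0$ otherwise.

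The main obstacle is establishing the inclusion-exclusion formula for $\Pr\{X_k = J\}$, which must be set up carefully. An alternative route---and presumably the one for which Lemma~\ref{lemme1} is tailored---would be to induct on $c$ via first-step analysis: conditioning on $X_1$ gives the recursion
$$\Pr\{T_{c,n}(p) > k+1\} = p_0 \Pr\{T_{c,n}(p) > k\} + \sum_{\ell=1}^n p_\ell \Pr\{T_{c-1,n-1}(p^{(\ell)}) > k\}$$
(with effective null probability $p_0+p_\ell$ for the vector $p^{(\ell)}$), and Lemma~\ref{lemme1} with $y_\ell = p_\ell$, $a = p_0$ (and $i$ shifted by one) folds the sum $\sum_\ell p_\ell$ coming from the inductive hypothesis into exactly the factors $\sum_{J \in S_{i,n}} P_J(p_0+P_J)^k$ required on the right-hand side, closing the induction once the base cases $c=1$ and $k=0$ are checked.
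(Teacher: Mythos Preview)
Your primary argument is correct and genuinely different from the paper's. You compute $\Pr\{X_k=J\}$ directly by inclusion--exclusion, interchange the sums over $J$ and $H$, and then collapse the alternating binomial partial sum via $\sum_{m=0}^{t}(-1)^m\binom{N}{m}=(-1)^t\binom{N-1}{t}$; each of these steps checks out (in particular $N=n-j\geq 1$ throughout, so the identity applies without edge-case trouble). The paper instead proceeds by induction on $k$: it establishes the first-step recursion
\[
\Pr\{T_{c,n}(p)>k\}=p_0\Pr\{T_{c,n}(p)>k-1\}+\sum_{\ell=1}^n p_\ell\,\Pr\{T_{c-1,n-1}(p^{(\ell)})>k-1\},
\]
verifies the base cases $k=0,1$ via the combinatorial identity~(\ref{combi}), and then uses Lemma~\ref{lemme1} (with $a=p_0$, $y_\ell=p_\ell$) to convert the double sum $\sum_\ell p_\ell\sum_{J\in S_{i-1,n}(\ell)}(p_0+p_\ell+P_J)^{k-1}$ into $\sum_{J\in S_{i,n}}P_J(p_0+P_J)^{k-1}$, which combines with the $p_0$-term to close the induction. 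Your direct route is shorter and more elementary, since it sidesteps the separate induction-on-$n$ proof of Lemma~\ref{lemme1}; the paper's route, on the other hand, yields the recursion~(\ref{ccdf}) as a by-product, which is reused later in Sections~3 and~4 to derive~(\ref{ETcn}) and the minimization argument. Your final paragraph accurately describes this alternative.
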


\begin{proof}
Relation (\ref{loiTc}) is true for $c=1$ since in this case we have
$$\Pr\{T_{1,n}(p) > k\} = p_0^k.$$
So we suppose now that $n \geq 2$ and $c=2,\ldots,n$.

Since $X_0 = \emptyset$, conditioning on $X_1$ and using the Markov property,
see for instance \cite{Sericola13}, we get for $k \geq 1$, 
\begin{equation} \label{ccdf}
\Pr\{T_{c,n}(p) > k\} = 
p_0 \Pr\{T_{c,n}(p) > k-1\} + \sum_{\ell = 1}^n
p_\ell \Pr\{T_{c-1,n-1}(p^{(\ell)}) > k-1\}.
\end{equation}
We now proceed by recurrence over index $k$.
Relation (\ref{loiTc}) is true for $k = 0$ since it is well-known that
\begin{equation} \label{combi}
\sum_{i=0}^{c-1} (-1)^{c-1-i} {n-i-1 \choose n-c}{n \choose i} = 1.
\end{equation}
Relation (\ref{loiTc}) is also true for $k=1$ since on the one hand 
$\Pr\{T_{c,n}(p) > 1\} = 1$
and on the other hand, using Relation (\ref{ccdf}), we have
\begin{eqnarray*}
\Pr\{T_{c,n}(p) > 1\}  
& = & p_0 \Pr\{T_{c,n}(p) > 0\} + \sum_{\ell = 1}^n
p_\ell \Pr\{T_{c-1,n-1}(p^{(\ell)}) > 0\} \\
& = & p_0 + \sum_{\ell = 1}^n p_\ell \\
& = & 1.
\end{eqnarray*}
Suppose now that Relation (\ref{loiTc}) is true for integer $k-1$, that is, 
suppose that we have
$$\Pr\{T_{c,n}(p) > k-1\} = 
\sum_{i=0}^{c-1} (-1)^{c-1-i} {n-i-1 \choose n-c}
\sum_{J \in S_{i,n}} (p_0 + P_J)^{k-1}.$$
Using (\ref{ccdf}) and the recurrence relation, we have
\begin{align*}
\Pr\{T_{c,n}(p) > k\} & = 
p_0 \sum_{i=0}^{c-1} (-1)^{c-1-i} {n-i-1 \choose n-c}
\sum_{J \in S_{i,n}} (p_0 + P_J)^{k-1} \\
& \;\;\; + 
\sum_{\ell = 1}^n p_\ell \sum_{i=0}^{c-2} (-1)^{c-2-i} {n-i-2 \choose n-c}
\sum_{J \in S_{i,n}(\ell)} (p_0 + p_\ell + P_J)^{k-1}. 
\end{align*}
Using the change of variable $i:=i-1$ in the second sum, we obtain 
\begin{align*}
\Pr\{T_{c,n}(p) > k\} & = 
\sum_{i=0}^{c-1} (-1)^{c-1-i} {n-i-1 \choose n-c}
p_0 \sum_{J \in S_{i,n}} (p_0 + P_J)^{k-1} \\
& \;\;\; + 
\sum_{i=1}^{c-1} (-1)^{c-1-i} {n-i-1 \choose n-c}
\sum_{\ell = 1}^n p_\ell \sum_{J \in S_{i-1,n}(\ell)} 
(p_0 + p_\ell + P_J)^{k-1} \\
& = \sum_{i=1}^{c-1} (-1)^{c-1-i} {n-i-1 \choose n-c}
\bigg[p_0 \sum_{J \in S_{i,n}} (p_0 + P_J)^{k-1} + \sum_{\ell = 1}^n p_\ell
\sum_{J \in S_{i-1,n}(\ell)} (p_0 + p_\ell + P_J)^{k-1}\bigg] \\
& \;\;\; + (-1)^{c-1} {n-1 \choose n-c} p_0^{k}.
\end{align*}
From Lemma \ref{lemme1}, we have
$$\sum_{\ell = 1}^n p_\ell
\sum_{J \in S_{i-1,n}(\ell)} (p_0 + p_\ell + P_J)^{k-1}
= \sum_{J \in S_{i,n}} P_J (p_0 + P_J)^{k-1},$$
that is 
\begin{eqnarray*}
\Pr\{T_{c,n}(p) > k\} 
& = & (-1)^{c-1} {n-1 \choose n-c} p_0^{k} +
\sum_{i=1}^{c-1} (-1)^{c-1-i} {n-i-1 \choose n-c}
\sum_{J \in S_{i,n}} (p_0 + P_J)^k \\
& = & \sum_{i=0}^{c-1} (-1)^{c-1-i} {n-i-1 \choose n-c}
\sum_{J \in S_{i,n}} (p_0 + P_J)^k,
\end{eqnarray*}
which completes the proof.
\end{proof}
This theorem also shows, as expected, that the function 
$\Pr\{T_{c,n}(p) > k\}$, as a function
of $p$, is symmetric, which means that it has the same value for
any permutation of the entries of $p$.
As a corollary, we obtain the following combinatorial identities.

\begin{cor} \label{cor1}
For every $c \geq 1$, for every $n \geq c$ and for all 
$p_1,\ldots,p_n \in (0,1)$ such that $p_1+\cdots+p_n = 1$, we have
$$\sum_{i=0}^{c-1} (-1)^{c-1-i} {n-i-1 \choose n-c}
\sum_{J \in S_{i,n}} (p_0 + P_J)^{k-1} = 1, \mbox{ for } k=0,1, \ldots,c-1.$$
\end{cor}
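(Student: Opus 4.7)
The plan is to derive Corollary~\ref{cor1} as an immediate consequence of Theorem~\ref{theo1}, combined with a trivial lower bound on $T_{c,n}(p)$. The hypothesis $p_1 + \cdots + p_n = 1$ forces $p_0 = 0$, and (\ref{loiTc}) then simplifies to
$$\Pr\{T_{c,n}(p) > k\} = \sum_{i=0}^{c-1} (-1)^{c-1-i} {n-i-1 \choose n-c}
\sum_{J \in S_{i,n}} P_J^{k}.$$
The problem thus reduces to identifying the left-hand side for the relevant range of $k$.

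The key probabilistic observation is that $T_{c,n}(p) \geq c$ almost surely. Indeed, the Markov chain $X$ is acyclic and each single draw enlarges the current collection by at most one element, so starting from $X_0 = \emptyset$ at least $c$ steps are required to reach a state in $S_{c,n}$. Consequently $\Pr\{T_{c,n}(p) > k\} = 1$ for every integer $k$ with $0 \leq k \leq c-1$. Substituting this back into the simplified formula above yields exactly the identity claimed in the statement (up to the obvious indexing alignment between Theorem~\ref{theo1}'s exponent $k$ and the exponent appearing in the corollary).

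There is essentially no technical obstacle: the corollary is a reading of Theorem~\ref{theo1} in the degenerate regime where the null coupon has probability zero, combined with the trivial fact that fewer than $c$ draws cannot produce a collection of $c$ distinct coupons. It is worth noting that the case $k=0$ recovers precisely the binomial identity (\ref{combi}) already used inside the proof of Theorem~\ref{theo1}, so the new content of Corollary~\ref{cor1} lies in the remaining values $k=1,\ldots,c-1$, where the same identity now holds with each term ${n\choose i}$ replaced by the symmetric power sum $\sum_{J \in S_{i,n}} P_J^{k}$ over the non-trivial probability vector $p$.
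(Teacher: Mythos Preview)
Your proof is correct and follows the same approach as the paper's: both simply observe that $T_{c,n}(p) \geq c$ almost surely, so that $\Pr\{T_{c,n}(p) > k\} = 1$ for $k = 0, \ldots, c-1$, and then read the identity directly off from Theorem~\ref{theo1}. Your remarks about the indexing alignment and the connection to identity~(\ref{combi}) at $k=0$ are additional clarifications beyond the paper's very brief argument, but the core idea is identical.
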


\begin{proof}
The random variable $T_{c,n}$ takes its values on the set $\{c,c+1,\ldots\}$, 
so we have
$$\Pr\{T_{c,n} > k\} = 1, \mbox{ for } k=0,1, \ldots, c-1,$$
which completes the proof thanks to Theorem \ref{theo1}.
\end{proof}

For every $n \geq 1$ and for every $v_0 \in [0,1]$, we define the
vector $v = (v_1,\ldots,v_n)$ by $v_i = (1-v_0)/n$. We will refer it to as
the almost-uniform distribution.
We then have, from (\ref{loiTc}),
$$\Pr\{T_{c,n}(v) > k\} = \sum_{i=0}^{c-1} (-1)^{c-1-i} {n-i-1 \choose n-c}
{n \choose i} \left(v_0\left(1 - \frac{i}{n}\right) + \frac{i}{n}\right)^k.$$
We denote by $u=(u_1,\ldots,u_n)$ the uniform distribution defined by 
$u_i = 1/n$. It is equal to $v$ when $v_0=0$. The dimensions of $u$ and
$v$ are specified by the context.

\section{Moments of $T_{c,n}$}
For $r \geq 1$, the $r$th moment of $T_{c,n}(p)$ is defined by
$$\nbE(T^r_{c,n}(p)) = \sum_{k = 1}^\infty k^r \Pr\{T_{c,n}(p) = k\}.$$
It can be obtained in function of the tail distribution of $T_{c,n}(p)$ 
by writing
\begin{eqnarray*}
\nbE(T^r_{c,n}(p)) 
& = & \sum_{k = 1}^\infty k^r \Pr\{T_{c,n}(p) = k\} \\
& = & \sum_{k = 1}^\infty k^r \Pr\{T_{c,n}(p) > k-1\} 
      - \sum_{k = 1}^\infty k^r \Pr\{T_{c,n}(p) > k\} \\ 
& = & \sum_{k = 0}^\infty \left((k+1)^r - k^r\right) \Pr\{T_{c,n}(p) > k\} \\
& = & \sum_{\ell=0}^{r-1} {r \choose \ell} 
       \sum_{k = 0}^\infty k^\ell \Pr\{T_{c,n}(p) > k\}.
\end{eqnarray*}
We easily get the first and second moments of $T_{c,n}(p)$, 
by taking $r=1$ and $r=2$ respectively, that is 
\begin{equation} \label{ETc}
\nbE(T_{c,n}(p)) = \sum_{k=0}^\infty \Pr\{T_{c,n}(p) > k\}
= \sum_{i=0}^{c-1} (-1)^{c-1-i} {n-i-1 \choose n-c}
\sum_{J \in S_{i,n}} \frac{1}{1 - (p_0+P_J)}
\end{equation}
and
$$\nbE(T^2_{c,n}(p)) = \nbE(T_{c,n}(p)) + 
2\sum_{k=1}^\infty k\Pr\{T_{c,n}(p) > k\}
= \sum_{i=0}^{c-1} (-1)^{c-1-i} {n-i-1 \choose n-c}
\sum_{J \in S_{i,n}} \frac{1+2(p_0+P_J)}{[1 - (p_0+P_J)]^2}.$$
The expected value given by (\ref{ETc}) has been obtained in \cite{Flajolet92}
in the particular case where $p_0=0$.

When the drawing probabilities are given by the almost-uniform distribution $v$,
we get
\begin{eqnarray*}
\nbE(T_{c,n}(v)) 
& = & \frac{1}{1-v_0}
\sum_{i=0}^{c-1} (-1)^{c-1-i} {n-i-1 \choose n-c} {n \choose i} 
\frac{n}{n-i} \\
& = & \frac{1}{1-v_0} \nbE(T_{c,n}(u)).
\end{eqnarray*}
Using the following two relations 
$${n \choose i} = {n-1 \choose i} + {n-1 \choose i-1}1_{\{i \geq 1\}}
\mbox{ and } {n-1 \choose i} \frac{n}{n-i} = {n \choose i},$$
where $1_A$ is the indicator function of set $A$,
we get
\begin{eqnarray*}
\nbE(T_{c,n}(u))
& = & \sum_{i=0}^{c-1} (-1)^{c-1-i} {n-i-1 \choose n-c} {n \choose i} 
\frac{n}{n-i} \\
& = & \sum_{i=0}^{c-1} (-1)^{c-1-i} {n-i-1 \choose n-c} {n \choose i} +
\sum_{i=1}^{c-1} (-1)^{c-1-i} {n-i-1 \choose n-c} {n-1 \choose i-1}
\frac{n}{n-i}.
\end{eqnarray*}
From Relation (\ref{combi}), the first sum is equal to $1$. Using the change
of variable $i:=i+1$ in the second sum, we obtain
\begin{eqnarray}
\nbE(T_{c,n}(u)) 
& = & 1 + \sum_{i=0}^{c-2} (-1)^{c-2-i} {n-i-2 \choose n-c} {n-1 \choose i}
\frac{n}{n-i+1} \nonumber \\
& = & 1 + \frac{n}{n-1}\nbE(T_{c-1,n-1}(u)). \label{ETcnu}
\end{eqnarray}
Note that the dimension of the uniform distribution in the left hand side is 
equal to $n$
and the one in the right hand side is equal to $n-1$.
Since $\nbE(T_{1,n}(u)) = 1$, we obtain
\begin{equation} \label{ETcnvu}
\nbE(T_{c,n}(u)) = n(H_n - H_{n-c})
\mbox{ and } \nbE(T_{c,n}(v)) = \frac{n(H_n - H_{n-c})}{1 - v_0},
\end{equation}
where $H_\ell$ is the $\ell$th harmonic number defined by 
$H_0=0$ and,
for $\ell \geq 1$, 
$$H_\ell = \sum_{i=1}^\ell 1/i.$$
We deduce easily from (\ref{ETcnu}) that, for every $c \geq 1$, we have
$$\lim_{n \ten \infty} \nbE(T_{c,n}(u)) = c
\mbox{ and } \lim_{n \ten \infty} \nbE(T_{c,n}(v)) = \frac{c}{1 - v_0}.$$

In the next section we show that, when $p_0$ is fixed, the minimum value
of $\nbE(T_{c,n}(p))$ is reached when $p=v$, with $v_0 = p_0$.

\section{Distribution minimizing $\nbE(T_{c,n}(p))$}
The following lemma will be used to prove the next theorem.

\begin{lemm} \label{lemme2}
For every $n \geq 1$ and $r_1,\ldots,r_n \in (0,1)$ with
$r_1 + \cdots + r_n = 1$, we have
$$\sum_{\ell=1}^n \frac{1}{r_\ell} \geq n^2.$$
\end{lemm}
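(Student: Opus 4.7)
The plan is to derive this immediately from the Cauchy--Schwarz inequality. The constraint $r_1 + \cdots + r_n = 1$ with each $r_\ell > 0$ suggests pairing $\sqrt{r_\ell}$ with $1/\sqrt{r_\ell}$, since then the pointwise product is identically $1$ and the products sum to $n$.

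Concretely, I would apply Cauchy--Schwarz in the form
\[
\Bigl(\sum_{\ell=1}^n \sqrt{r_\ell}\cdot\frac{1}{\sqrt{r_\ell}}\Bigr)^{\!2} \leq \Bigl(\sum_{\ell=1}^n r_\ell\Bigr)\Bigl(\sum_{\ell=1}^n \frac{1}{r_\ell}\Bigr).
\]
The left-hand side equals $n^2$, and the first factor on the right equals $1$ by hypothesis, so the desired inequality $\sum_{\ell=1}^n 1/r_\ell \geq n^2$ falls out in a single line.

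Two equivalent routes are worth mentioning as sanity checks: the AM--HM inequality applied to $r_1,\ldots,r_n$ gives $(r_1+\cdots+r_n)/n \geq n/\sum_\ell(1/r_\ell)$, which rearranges to the same bound; and Jensen's inequality applied to the convex function $x \mapsto 1/x$ evaluated at the uniform average of the $r_\ell$ yields the same conclusion.

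There is essentially no obstacle here --- the statement is a textbook convexity inequality, and each of the three approaches above settles it in a couple of lines. The only point worth recording in passing is that equality holds if and only if all the $r_\ell$ are equal to $1/n$, a fact that is presumably needed later to identify the (almost-)uniform distribution as the \emph{unique} minimizer of $\nbE(T_{c,n}(p))$.
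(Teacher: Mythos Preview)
Your proof is correct, but it differs from the paper's. The paper proves the lemma by induction on $n$: writing $\sum_{\ell=1}^n 1/r_\ell = 1/r_n + (1-r_n)^{-1}\sum_{\ell=1}^{n-1} 1/h_\ell$ with $h_\ell = r_\ell/(1-r_n)$, applying the induction hypothesis to the normalized $(n-1)$-tuple $(h_1,\ldots,h_{n-1})$, and then checking the elementary algebraic identity $1/r_n + (n-1)^2/(1-r_n) = (nr_n-1)^2/(r_n(1-r_n)) + n^2$. Your Cauchy--Schwarz (equivalently AM--HM or Jensen) argument is the standard one-line route and is certainly cleaner; the paper's induction has the minor virtue of being fully self-contained, not invoking any named inequality, and its final identity makes the equality case $r_n=1/n$ visible at each step. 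One incidental remark: the paper does not actually assert uniqueness of the minimizer in Theorem~\ref{theocn}, so the equality characterization you mention, while true, is not in fact used downstream.
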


\proof
We proceed by recurrence. The result is clearly true for $n=1$.
Suppose that the result is true for integer $n-1$, $n \geq 2$.
We then have
$$\sum_{\ell=1}^n \frac{1}{r_\ell} 
= \frac{1}{r_n} + \sum_{\ell=1}^{n-1} \frac{1}{r_\ell} 
= \frac{1}{r_n} + \frac{1}{1-r_n} \sum_{\ell=1}^{n-1} \frac{1}{h_\ell},$$
where $h_\ell$ is given, for $\ell=1,\ldots n-1$, by
$$h_\ell = \frac{r_\ell}{1-r_n}.$$
Since $h_1 + \cdots + h_{n-1} = 1$, we get, using the recurrence hypothesis,
$$\sum_{\ell=1}^n \frac{1}{r_\ell}
\geq \frac{1}{r_n} + \frac{(n-1)^2}{1-r_n}
= \frac{(nr_n - 1)^2}{r_n(1-r_n)} + n^2 \geq n^2,$$
which completes the proof.
\cqfd

\begin{theorem} \label{theocn}
For every $n \geq 1$, for every $c=1,\ldots,n$ and
$p=(p_1,\ldots,p_n) \in (0,1)^n$ with
$p_1 + \cdots + p_n \leq 1$, we have
$$\nbE(T_{c,n}(p)) \geq \nbE(T_{c,n}(v)) \geq \nbE(T_{c,n}(u)),$$
where
$v = (v_1,\ldots,v_n)$ with $v_i = (1-p_0)/n$ and
$p_0 = 1 - (p_1 + \cdots + p_n)$ and where
$u = (1/n,\ldots,1/n)$.
\end{theorem}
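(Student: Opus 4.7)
The second inequality $\nbE(T_{c,n}(v)) \geq \nbE(T_{c,n}(u))$ is immediate from the closed-form expressions in (\ref{ETcnvu}): since $v_0 \geq 0$ we have $1/(1-v_0) \geq 1$, so
\[
\nbE(T_{c,n}(v)) = \frac{n(H_n-H_{n-c})}{1-v_0} \geq n(H_n-H_{n-c}) = \nbE(T_{c,n}(u)).
\]
So the real content is the first inequality $\nbE(T_{c,n}(p)) \geq \nbE(T_{c,n}(v))$, which I will prove by induction on $c$.

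The base case $c=1$ is trivial since $\nbE(T_{1,n}(p)) = 1/(1-p_0)$ depends only on $p_0$, and $v_0 = p_0$. The inductive tool is the one-step recurrence
\[
(1-p_0)\nbE(T_{c,n}(p)) = 1 + \sum_{\ell=1}^n p_\ell \nbE(T_{c-1,n-1}(p^{(\ell)})),
\]
which I plan to obtain by summing relation (\ref{ccdf}) over $k \geq 1$ and using $\Pr\{T_{c,n}(p)>0\} = 1$. Assuming the theorem is known for $c-1$ (at dimension $n-1$), each $\nbE(T_{c-1,n-1}(p^{(\ell)}))$ is bounded below by $\nbE(T_{c-1,n-1}(v^{(\ell)}))$, where $v^{(\ell)}$ is the almost-uniform distribution on $n-1$ entries with null probability $p_0+p_\ell$; by (\ref{ETcnvu}) this equals $(n-1)(H_{n-1}-H_{n-c})/(1-p_0-p_\ell)$.

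After substitution, the whole inductive step reduces to establishing
\[
\sum_{\ell=1}^n \frac{p_\ell}{1-p_0-p_\ell} \geq \frac{n}{n-1}.
\]
Setting $r_\ell = p_\ell/(1-p_0)$, so that $r_1+\cdots+r_n = 1$, the left-hand side rewrites as $\sum_\ell r_\ell/(1-r_\ell) = \sum_\ell 1/(1-r_\ell) - n$, so the inequality is equivalent to $\sum_\ell 1/(1-r_\ell) \geq n^2/(n-1)$. This is precisely Lemma \ref{lemme2} applied to the probabilities $h_\ell = (1-r_\ell)/(n-1)$, which sum to $1$. Plugging back and using the identity $n(H_n-H_{n-c}) = 1 + n(H_{n-1}-H_{n-c})$ gives $(1-p_0)\nbE(T_{c,n}(p)) \geq n(H_n - H_{n-c}) = (1-p_0)\nbE(T_{c,n}(v))$, closing the induction.

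The main point I expect to be delicate is choosing the right rescaling $h_\ell = (1-r_\ell)/(n-1)$ so that Lemma \ref{lemme2} applies and gives exactly the constant $n^2/(n-1)$ needed to match the factor $(n-1)(H_{n-1}-H_{n-c})$ coming from the inductive hypothesis. Everything else is bookkeeping with harmonic numbers and the one-step recurrence, and the alternating-sign formula (\ref{ETc}) is not needed directly here.
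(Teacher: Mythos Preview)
Your argument is correct and follows essentially the same route as the paper: the second inequality from (\ref{ETcnvu}), induction on $c$ via the recurrence obtained by summing (\ref{ccdf}), the inductive bound $(n-1)(H_{n-1}-H_{n-c})/(1-p_0-p_\ell)$, and then Lemma~\ref{lemme2} to get $\sum_\ell p_\ell/(1-p_0-p_\ell)\geq n/(n-1)$. Your rescaling $h_\ell=(1-r_\ell)/(n-1)$ is exactly the paper's substitution $r_\ell=(1-p_0-p_\ell)/((n-1)(1-p_0))$ under a different name, so there is no substantive difference.
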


\begin{proof}
The second inequality comes from (\ref{ETcnvu}).

Defining $v_0 = 1 - (v_1 + \cdots + v_n)$, we have $v_0 = p_0$.
For $c=1$, the result is trivial since we have from
Relation (\ref{ETc})
$$\nbE(T_{1,n}(p)) = \frac{1}{1 - p_0} = \frac{1}{1 - v_0} = \nbE(T_{1,n}(v)).$$

For $c \geq 2$, which implies that $n \geq 2$,
summing Relation (\ref{ccdf}) for $k \geq 1$, we get
$$\nbE(T_{c,n}(p)) - 1 = p_0 \nbE(T_{c,n}(p)) + \sum_{\ell = 1}^n
p_\ell \nbE(T_{c-1,n-1}(p^{(\ell)})).$$
We then obtain
\begin{equation} \label{ETcn}
\nbE(T_{c,n}(p)) = \frac{1}{1 - p_0}\left(1 + \sum_{\ell = 1}^n
p_\ell \nbE(T_{c-1,n-1}(p^{(\ell)}))\right).
\end{equation}
We now proceed by recurrence.
Suppose that the inequality is true for integer $c-1$, with $c \geq 2$, 
\emph{i.e.},
suppose that, for every $n \geq c$, for every
$q = (q_1,\ldots,q_{n-1}) \in (0,1)^{n-1}$ with
$q_1 + \cdots + q_{n-1} \leq 1$, we have
$$\nbE(T_{c-1,n-1}(q)) \geq \nbE(T_{c-1,n-1}(v)), \mbox{ with }
v_0 = q_0 = 1 - \sum_{i=1}^{n-1} q_i.$$
Using Relation (\ref{ETcnvu}), this implies that
$$\nbE(T_{c-1,n-1}(p^{(\ell)})) \geq 
\frac{(n-1)(H_{n-1} - H_{n-c})}{1 - (p_0 + p_\ell)}.$$
From Relation (\ref{ETcn}), we obtain
\begin{equation} \label{nbE}
\nbE(T_{c,n}(p)) \geq \frac{1}{1 - p_0}
\left(1 + (n-1)(H_{n-1} - H_{n-c}) \sum_{\ell = 1}^n 
\frac{p_\ell}{1 -(p_0 + p_\ell)}\right).
\end{equation}
Observe now that for $\ell=1,\ldots,n$ we have
$$\frac{p_\ell}{1-(p_0+p_\ell)} = -1 + \frac{1}{(n-1)r_\ell},$$
where the $r_\ell$ are given by
$$r_\ell = \frac{1-(p_0+p_\ell)}{(n-1)(1-p_0)}$$
and satisfy $r_1,\ldots,r_n \in (0,1)$ with
$r_1 + \cdots + r_n = 1$. From Lemma \ref{lemme2}, we obtain
$$\sum_{\ell = 1}^n \frac{p_\ell}{1-(p_0+p_\ell)} 
= -n + \frac{1}{n-1} \sum_{\ell = 1}^n \frac{1}{r_\ell}
\geq -n + \frac{n^2}{n-1} = \frac{n}{n-1}.$$
Replacing this value in (\ref{nbE}), we obtain, using (\ref{ETcnvu}),
$$\nbE(T_{c,n}(p)) \geq 
\frac{1}{1 - p_0}
\left(1 + n(H_{n-1} - H_{n-c})\right) = 
\frac{n(H_{n} - H_{n-c})}{1 - p_0} = \nbE(T_{c,n}(v)),$$
which completes the proof.
\end{proof}

\section{Distribution minimizing the distribution of $T_{n,n}(p)$}
For every $n \geq 1$, $i=0,1,\ldots,n$ and $k \geq 0$,
we denote by $N_i^{(k)}$ the number of coupons of type $i$
collected at instants $1,\ldots,k$. It is well-known that the joint 
distribution of the $N_i^{(k)}$ is a multinomial distribution. More precisely,
for every $k \geq 0$ and $k_0, k_1,\ldots,k_n \geq 0$ such that
$k_0 + k_1 + \cdots + k_n = k$, we have
\begin{equation} \label{multinome}
\Pr\{N_0^{(k)} = k_0, N_1^{(k)} = k_1,\ldots,N_n^{(k)} = k_n\} = 
\frac{k!}{k_0!k_1!\cdots k_n!} p_0^{k_0}p_1^{k_1} \cdots p_n^{k_n}.
\end{equation}
Recall that the coupons of type $0$ do not belong to the collection.
For every $\ell = 1,\ldots,n$, we easily deduce that, for every $k \geq 0$
and $k_1,\ldots,k_\ell \geq 0$ such that
$k_1 + \cdots + k_\ell \leq k$,
$$\Pr\{N_1^{(k)} = k_1,\ldots,N_\ell^{(k)} = k_\ell\} = 
\frac{k!}{k_1!\cdots k_\ell!\left(k - (k_1 + \cdots + k_\ell)\right)!} 
p_1^{k_1} \cdots p_\ell^{k_\ell}
\left(1 - (p_1 + \cdots + p_\ell)\right)^{k-(k_1 + \cdots + k_\ell)}.$$
To prove the next theorem, we recall some basic results on convex functions.
A function $f$ is said to be convex on an interval $I$
if for every $x,y \in I$ and $\lambda \in [0,1]$, we have
$$f(\lambda x + (1-\lambda)y) \leq \lambda f(x)+(1-\lambda)f(y).$$
Let $f$ be a function defined on an interval $I$.
For every $\alpha \in I$, we introduce the function $g_\alpha$, 
defined for every $x \in I\setminus\{\alpha\}$, by
$$g_\alpha(x) = \frac{f(x)-f(\alpha)}{x-\alpha}.$$
It is well-known that $f$ is a convex function on interval $I$
if and only if for every $\alpha \in I$, the function $g_\alpha$
is increasing on $I\setminus\{\alpha\}$.
The next result is also known but less popular, so we give its proof.

\begin{lemm} \label{lemmconvex}
Let $f$ be a convex function on an interval $I$.
For every $x,y,z,t \in I$ with $x < y,z <t$, we have
$$(t-y)f(z) + (z-x)f(y) \leq (t-y)f(x) + (z-x)f(t).$$
If, moreover, we have $t+x=y+z$, we get
$$f(z) + f(y) \leq f(x) + f(t).$$
\end{lemm}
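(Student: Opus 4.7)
The plan is to realize $y$ and $z$ as convex combinations of the endpoints $x$ and $t$, apply the defining inequality of convexity twice, and then combine the results with carefully chosen positive weights so that the coefficients of $f(x)$ and $f(t)$ collapse to exactly the ones in the statement. Under the reading of the hypothesis that places both $y$ and $z$ in $[x,t]$, one has the identities
$$y = \frac{t-y}{t-x}\,x + \frac{y-x}{t-x}\,t, \qquad z = \frac{t-z}{t-x}\,x + \frac{z-x}{t-x}\,t,$$
both of which are bona fide convex combinations because the four weights involved lie in $(0,1)$.

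Applying the convexity of $f$ to these two representations gives
$$(t-x)f(y) \leq (t-y)f(x) + (y-x)f(t), \qquad (t-x)f(z) \leq (t-z)f(x) + (z-x)f(t).$$
I would then multiply the first inequality by the strictly positive quantity $(z-x)$, the second by the strictly positive quantity $(t-y)$, and add them. On the right-hand side, the coefficient of $f(x)$ becomes $(t-y)(z-x) + (t-y)(t-z) = (t-y)(t-x)$, and the coefficient of $f(t)$ becomes $(z-x)(y-x) + (z-x)(t-y) = (z-x)(t-x)$. Dividing through by the positive factor $(t-x)$ yields exactly the first inequality
$$(z-x)f(y) + (t-y)f(z) \leq (t-y)f(x) + (z-x)f(t).$$
For the second claim, the extra hypothesis $t+x = y+z$ is equivalent to $z-x = t-y$, and this common positive quantity factors out of both sides of the first inequality, leaving $f(y)+f(z) \leq f(x)+f(t)$.

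The only point requiring care is the implicit positioning $y,z\in[x,t]$ that legitimizes the convex-combination expressions above; this reading is suggested by the paper's notation ``$x<y,z<t$'' and is corroborated by the symmetric condition $t+x=y+z$ appearing in the second part, which naturally presupposes that $y$ and $z$ lie between $x$ and $t$. Apart from this ordering issue, the proof is a purely algebraic manipulation on top of the defining inequality of convexity and makes no use of the $g_\alpha$-monotonicity characterization recalled just before the lemma.
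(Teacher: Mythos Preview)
Your proof is correct and the algebra checks out. It is, however, a genuinely different route from the paper's. The paper exploits the increasing-slope characterization $g_\alpha$ recalled just before the lemma: from $z<t$ it gets $g_x(z)\le g_x(t)$, from $x<y$ it gets $g_t(x)\le g_t(y)$, and chaining these via $g_x(t)=g_t(x)$ gives directly
\[
\frac{f(z)-f(x)}{z-x}\le\frac{f(t)-f(y)}{t-y},
\]
which rearranges to the stated inequality after cross-multiplying by the positive quantities $z-x$ and $t-y$. You instead bypass that characterization entirely: you realize $y$ and $z$ as convex combinations of the endpoints $x$ and $t$, apply the \emph{definition} of convexity twice, and take a weighted sum engineered so that the cross terms telescope to $(t-x)$ on both sides. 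The paper's approach is shorter and explains why the $g_\alpha$ device was introduced in the first place; yours is more self-contained (it needs nothing beyond the defining inequality of convexity) at the price of a little extra bookkeeping with the coefficients. Both arguments rely on the same reading of the hypothesis, namely $x<y<t$ and $x<z<t$, which is also what the paper's cross-multiplication step implicitly uses.
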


\begin{proof}
It suffices to apply twice the property that function $g_\alpha$ is increasing
on $I\setminus\{\alpha\}$, for every $\alpha \in I$.
Since $z < t$, we have $g_x(z) \leq g_x(t)$ and since $x <y$, we have
$g_t(x) \leq g_t(y)$. But as $g_x(t) = g_t(x)$ and $g_t(y) = g_y(t)$, we obtain
$g_x(z) \leq g_x(t) = g_t(x) \leq g_t(y)=g_y(t)$, which means in particular 
that
$$\frac{f(z)-f(x)}{z-x} \leq \frac{f(t)-f(y)}{t-y},$$
that is 
$$(t-y)f(z) + (z-x)f(y) \leq (t-y)f(x) + (z-x)f(t).$$
The rest of the proof is trivial since $t+x=y+z$ implies that $t-y=z-x>0$.
\end{proof}

\begin{theorem} \label{autreloi}
For every $n \geq 1$ and $p=(p_1,\ldots,p_n) \in (0,1)^n$ with
$p_1 + \cdots + p_n \leq 1$, we have, for every $k \geq 0$,
$$\Pr\{T_{n,n}(p') \leq k\} \leq \Pr\{T_{n,n}(p) \leq k\},$$
where $p'=(p_1,\ldots,p_{n-2},p'_{n-1},p'_n)$
with
$p'_{n-1} = \lambda p_{n-1} + (1-\lambda) p_n$ and
$p'_{n} = (1-\lambda) p_{n-1} + \lambda p_n$,
for every $\lambda \in [0,1]$. 
\end{theorem}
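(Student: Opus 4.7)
The plan is to apply the multinomial representation (\ref{multinome}) directly to $\Pr\{T_{n,n}(p)\leq k\}$, isolate the entire dependence on $(p_{n-1},p_n)$ in a single bracketed factor, and then invoke Lemma \ref{lemmconvex} applied to the convex function $x\mapsto x^m$.

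First, since $T_{n,n}(p)\leq k$ is exactly the event that every non-null coupon has been drawn at least once during the first $k$ trials,
\begin{equation*}
\Pr\{T_{n,n}(p)\leq k\}
=\sum_{\substack{k_0\geq 0,\;k_1,\ldots,k_n\geq 1\\ k_0+k_1+\cdots+k_n=k}}
\frac{k!}{k_0!\,k_1!\cdots k_n!}\,p_0^{k_0}p_1^{k_1}\cdots p_n^{k_n}.
\end{equation*}
I would then regroup the terms by fixing $(k_0,k_1,\ldots,k_{n-2})$ together with $m:=k_{n-1}+k_n$; the constraints $k_{n-1},k_n\geq 1$ force $m\geq 2$, and performing the inner sum over $k_{n-1}\in\{1,\ldots,m-1\}$ via the binomial theorem yields $(p_{n-1}+p_n)^m-p_{n-1}^m-p_n^m$, giving
\begin{equation*}
\Pr\{T_{n,n}(p)\leq k\}=\sum_{\substack{k_0\geq 0,\;k_1,\ldots,k_{n-2}\geq 1,\;m\geq 2\\ k_0+k_1+\cdots+k_{n-2}+m=k}}\frac{k!\,p_0^{k_0}p_1^{k_1}\cdots p_{n-2}^{k_{n-2}}}{k_0!\,k_1!\cdots k_{n-2}!\,m!}\,\bigl[(p_{n-1}+p_n)^m-p_{n-1}^m-p_n^m\bigr].
\end{equation*}

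Since $p'_{n-1}+p'_n=p_{n-1}+p_n$, every factor in this expression is unchanged when $p$ is replaced by $p'$ \emph{except} for the subtractive term $p_{n-1}^m+p_n^m$, which becomes $(p'_{n-1})^m+(p'_n)^m$. The comparison of $\Pr\{T_{n,n}(p')\leq k\}$ with $\Pr\{T_{n,n}(p)\leq k\}$ therefore reduces, term by term, to comparing these two quantities for each $m\geq 2$. At this point I would invoke Lemma \ref{lemmconvex} with the convex function $f(x)=x^m$: without loss of generality $p_{n-1}\leq p_n$, and then both $p'_{n-1}$ and $p'_n$ lie in $[p_{n-1},p_n]$ (being convex combinations of the endpoints) while $p'_{n-1}+p'_n=p_{n-1}+p_n$; choosing $x=p_{n-1}$, $t=p_n$ and $\{y,z\}=\{p'_{n-1},p'_n\}$ ordered so that $x\leq y,z\leq t$ and $y+z=x+t$, the lemma yields $(p'_{n-1})^m+(p'_n)^m\leq p_{n-1}^m+p_n^m$. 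Weighting each bracket by its nonnegative multinomial coefficient and summing delivers the claimed monotonicity in the distribution of $T_{n,n}$.

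The only delicate point in the plan is the regrouping step: one must check that imposing $k_{n-1},k_n\geq 1$ in the original multinomial sum is encoded \emph{exactly} by the combination $(p_{n-1}+p_n)^m-p_{n-1}^m-p_n^m$ for $m\geq 2$ (the boundary terms $k_{n-1}\in\{0,m\}$ being precisely the two subtracted monomials, while $m\leq 1$ cannot arise), so that the full dependence on $(p_{n-1},p_n)$ is captured in a single symmetric bracket to which the convexity lemma applies. Once this bookkeeping is in place, Lemma \ref{lemmconvex} closes the argument in one line.
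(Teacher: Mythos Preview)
Your argument is correct and follows the same overall strategy as the paper: express $\Pr\{T_{n,n}(p)\le k\}$ via the multinomial formula, isolate the dependence on the last two coordinates in a single symmetric expression, and then apply Lemma~\ref{lemmconvex} to the convex function $x\mapsto x^m$ using $p'_{n-1}+p'_n=p_{n-1}+p_n$.

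The only difference is in the bookkeeping of the null-coupon count $k_0$. The paper sums $k_0$ together with $k_{n-1},k_n$ inside the inner block, normalises by $p_0+p_{n-1}+p_n$ to obtain the probabilities $q_0,q_{n-1},q_n$, and ends up with the four-term factor $1-(q_0+q_{n-1})^s-(q_0+q_n)^s+q_0^s$; the convexity lemma is then applied to the shifted arguments $q_0+q_{n-1}$ and $q_0+q_n$. You instead keep $k_0$ in the outer sum and obtain the cleaner three-term bracket $(p_{n-1}+p_n)^m-p_{n-1}^m-p_n^m$, to which the lemma applies directly in the variables $p_{n-1},p_n$. Your decomposition is a bit more economical (no normalisation, one fewer term), while the paper's grouping keeps the outer sum indexed only by $(k_1,\ldots,k_{n-2})$; both routes are equivalent and the convexity step is identical.
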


\begin{proof}
If $\lambda = 1$ then we have $p'=p$ so the result is trivial.
If $\lambda = 0$ then we have $p'_{n-1} =  p_n$ and $p'_{n} = p_{n-1}$ and
the result is also trivial since the function $\Pr\{T_{n,n}(p) \leq k\}$ is
a symmetric function of $p$.
We thus suppose now that $\lambda \in (0,1)$.

For every $n \geq 1$ and $k \geq 0$, we have
$$\{T_{n,n}(p) \leq k\} = \{N_1^{(k)} > 0,\ldots,N_n^{(k)} > 0\}.$$
We thus get, for $k_1 > 0,\ldots,k_{n-2}>0$ such that 
$k_1 + \cdots + k_{n-2} \leq k$,
setting $s=k-(k_1 + \cdots + k_{n-2})$,
\begin{align*}
\Pr\{T_{n,n}(p) \leq k, & \; N_1^{(k)}=k_1,\ldots,N_{n-2}^{(k)}=k_{n-2}\} \\
& = \; \Pr\{N_1^{(k)}=k_1,\ldots,N_{n-2}^{(k)}=k_{n-2},
N_{n-1}^{(k)} > 0,N_{n}^{(k)} >0\} \\
& = \; \sum_{
\begin{array}{c}
\scriptstyle{u \geq 0, v>0, w > 0,} \\ [-5pt]
\scriptstyle{u + v +w = s} \\
\end{array}
}
\Pr\{N_0^{(k)} = u,N_1^{(k)}=k_1,\ldots,N_{n-2}^{(k)}=k_{n-2},
N_{n-1}^{(k)} = v, N_n^{(k)} = w\}.
\end{align*}
Using Relation (\ref{multinome}) and introducing the notation
$$q_0 = \frac{p_0}{p_0+p_{n-1}+p_n}, \;
q_{n-1} = \frac{p_{n-1}}{p_0+p_{n-1}+ p_n} \mbox{ and }
q_n = \frac{p_{n}}{p_0+p_{n-1}+p_n},$$
we obtain
\begin{align*}
\Pr\{T_{n,n}(p) \leq k, & \; N_1^{(k)}=k_1,\ldots,N_{n-2}^{(k)}=k_{n-2}\} \\
& = \sum_{
\begin{array}{c}
\scriptstyle{u \geq 0, v>0, w > 0,} \\ [-5pt]
\scriptstyle{u + v +w = s} \\
\end{array}
}
\frac{k! p_0^u p_1^{k_1} \cdots p_{n-2}^{k_{n-2}} p_{n-1}^{v} p_{n}^{w}}
{u!k_1!\cdots k_{n-2}!v!w!} \\
& = \frac{k! p_1^{k_1} \cdots p_{n-2}^{k_{n-2}}}
{k_1!\cdots k_{n-2}!} \sum_{
\begin{array}{c}
\scriptstyle{u \geq 0, v>0, w > 0,} \\ [-5pt]
\scriptstyle{u + v +w = s} \\
\end{array}
}
\frac{p_0^u p_{n-1}^{v} p_{n}^{w}}
{u!v!w!} \\
& = \frac{k!
p_1^{k_1} \cdots p_{n-2}^{k_{n-2}}
\left(1 - (p_1 + \cdots + p_{n-2})\right)^{s}}
{k_1!\cdots k_{n-2}!s!}
\sum_{
\begin{array}{c}
\scriptstyle{u \geq 0, v>0, w > 0,} \\ [-5pt]
\scriptstyle{u + v +w = s} \\
\end{array}
}
\frac{s!}{u!v!w!} q_0^u q_{n-1}^v q_n^w \\
& = \frac{k! p_1^{k_1} \cdots p_{n-2}^{k_{n-2}}
\left(1 - (p_1 + \cdots + p_{n-2})\right)^{s}}
{k_1!\cdots k_{n-2}!s!}
\left(1 - \left(q_0+q_{n-1}\right)^{s}
- \left(q_0+q_{n}\right)^{s} 
+ q_0^{s}\right). \\
\end{align*}
Note that this relation is not true if at least one of the $k_\ell$ is zero.
Indeed, if $k_\ell=0$ for some $\ell = 1, \ldots,n-2$, we have
$$\Pr\{T_{n,n}(p) \leq k,N_1^{(k)}=k_1,\ldots,N_{n-2}^{(k)}=k_{n-2}\} = 0.$$
Summing over all the $k_1,\ldots,k_{n-2}$ such that 
$k_1 + \cdots + k_{n-2} \leq k$, we get
\begin{equation} \label{eqprem}
\Pr\{T_{n,n}(p) \leq k\} = 
\hspace{-0.5cm} \sum_{(k_1,\ldots,k_{n-2}) \in E_{n-2}} \hspace{-0.5cm}
\frac{k! p_1^{k_1} \cdots p_{n-2}^{k_{n-2}}
\left(1 - (p_1 + \cdots + p_{n-2})\right)^{s}}
{k_1!\cdots k_{n-2}!s!}
\left(1 - \left(q_0+q_{n-1}\right)^{s}
- \left(q_0+q_{n}\right)^{s} 
+ q_0^{s}\right),
\end{equation}
where the set $E_{n-2}$ is defined by
$$E_{n-2} = \{(k_1,\ldots,k_{n-2}) \in \left(\nbN^*\right)^{n-2} \mid
k_1 + \cdots + k_{n-2} \leq k\}$$
and $\nbN^*$ is the set of positive integers.

Note that for $n =2$, since $p_0+p_1+p_2=1$, we have 
$\Pr\{T_{2,2}(p) \leq k\} = \left(1 - \left(p_0+p_{1}\right)^{k}
- \left(p_0+p_{2}\right)^{k} 
+ p_0^{k}\right).$

Recall that $p_0 =1 - (p_1 + \cdots +p_n)$.
By definition of $p'_{n-1}$ and $p'_n$, we have,
for every $\lambda \in (0,1)$, $p'_{n-1} + p'_n = p_{n-1} + p_n$.
It follows that, by definition of $p'$,
$$p'_0 = 1 - (p_1 + \cdots + p_{n-2} + p'_{n-1} + p'_n)
= 1 - (p_1 + \cdots + p_{n-2} + p_{n-1} + p_n) = p_0.$$

Suppose that we have $p_{n-1} < p_n$.
This implies, by definition of $p'_{n-1}$ and $p'_n$, that
$p_{n-1} < p'_{n-1},p'_n < p_n$, that is
$q_{n-1} < q'_{n-1},q'_n < q_n$,
where
$$q'_{n-1} = \frac{p'_{n-1}}{p'_0+p'_{n-1}+ p'_n} 
= \frac{p'_{n-1}}{p_0+p_{n-1}+ p_n}
\mbox{ and }
q'_n = \frac{p'_{n}}{p'_0+p'_{n-1}+p'_n}
= \frac{p'_{n}}{p_0+p_{n-1}+p_n}.$$
In the same way, we have
$$q'_0 = \frac{p'_{0}}{p'_0+p'_{n-1}+ p'_n} = 
\frac{p_{0}}{p_0+p_{n-1}+ p_n} = q_0.$$
We thus have $q_0 + q_{n-1} < q'_0 + q'_{n-1},q'_0+q'_n < q_0+q_n$.
The function $f$ defined by $f(x) = x^s$ is convex on interval $[0,1]$ so,
from Lemma \ref{lemmconvex}, since 
$2q_0 + q_{n-1} + q_n = 2q'_0 + q'_{n-1} + q'_n$,
we have
\begin{equation} \label{inegq}
\left(q'_0+q'_{n-1}\right)^s +
\left(q'_0+q'_{n}\right)^{s}
\leq \left(q_0+q_{n-1}\right)^s + 
\left(q_0+q_n\right)^s.
\end{equation}
Similarly, if $p_n < p_{n-1}$, we have, by definition,
$p_{n} < p'_{n},p'_{n-1} < p_{n-1}$, that is
$q_{n} < q'_{n},q'_{n-1} < q_{n-1}$
and thus we also have Relation (\ref{inegq}) in this case.
Using Relation (\ref{inegq}) in Relation (\ref{eqprem}), we get, 
since $q'_0=q_0$,
\begin{align*}
\Pr\{T_{n,n}(p) \leq k\} & \leq 
\hspace{-0.5cm} \sum_{(k_1,\ldots,k_{n-2}) \in E_{n-2}} \hspace{-0.5cm}
\frac{k! p_1^{k_1} \cdots p_{n-2}^{k_{n-2}}
\left(1 - (p_1 + \cdots + p_{n-2})\right)^{s}}
{k_1!\cdots k_{n-2}!s!}
\left(1 - \left(q'_0+q'_{n-1}\right)^{s}
- \left(q'_0+q'_{n}\right)^{s} 
+ {q'_0}^{s}\right) \\
& = \Pr\{T_{n,n}(p') \leq k\},
\end{align*}
which completes the proof.
\end{proof}

The function $\Pr\{T_{n,n}(p) \leq k\}$, as a function of $p$, being 
symmetric, this theorem can easily 
be extended to the case where the two entries $p_{n-1}$ and $p_n$ of $p$,
which are different from the entries $p'_{n-1}$ and $p'_n$ of $p'$, 
are any $p_i, p_j \in \{p_1,\ldots,p_n\}$, with $i \neq j$.
 
In fact, we have shown in this theorem that for fixed $n$ and $k$, 
the function of $p$, $\Pr\{T_{n,n}(p) \leq k\}$, is a Schur-convex function,
that is, a function that preserves the order of majorization.
See \cite{Marshall81} for more details on this subject.

\begin{theorem} \label{notetheo1}
For every $n \geq 1$ and $p=(p_1,\ldots,p_n) \in (0,1)^n$ with
$p_1 + \cdots + p_n \leq 1$, we have, for every $k \geq 0$,
$$\Pr\{T_{n,n}(p) > k\} \geq \Pr\{T_{n,n}(v) > k\} \geq \Pr\{T_{n,n}(u) > k\},$$
where $v=(v_1,\ldots,v_n)$ with $v_i = (1-p_0)/n$ and
$p_0 = 1 - (p_1 + \cdots + p_n)$ and where $u=(1/n,\ldots,1/n)$.
\end{theorem}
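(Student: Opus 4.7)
The argument splits cleanly into two pieces. I would handle the second inequality $\Pr\{T_{n,n}(v) > k\} \geq \Pr\{T_{n,n}(u) > k\}$ first, by a coupling argument. Under the distribution $v$, each draw is independently \emph{null} (coupon $0$) with probability $v_0$, and conditional on being non-null is uniformly distributed on $\{1,\ldots,n\}$, that is, distributed as $u$. Letting $M_k$ denote the number of non-null draws among the first $k$ (so $M_k$ is binomial with parameters $k$ and $1-v_0$), the non-null draws, read in order, form an independent sample from $u$. Thus in this coupling $T_{n,n}(v) \leq k$ if and only if $T_{n,n}(u) \leq M_k$, yielding
$$\Pr\{T_{n,n}(v) \leq k\} = \sum_{m=0}^{k} {k \choose m}(1-v_0)^{m} v_0^{k-m}\,\Pr\{T_{n,n}(u) \leq m\}.$$
Since $m \mapsto \Pr\{T_{n,n}(u) \leq m\}$ is non-decreasing and $M_k \leq k$ almost surely, the right-hand side is at most $\Pr\{T_{n,n}(u) \leq k\}$, and complementing yields the second inequality.

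The first inequality $\Pr\{T_{n,n}(p) > k\} \geq \Pr\{T_{n,n}(v) > k\}$ is where Theorem \ref{autreloi} carries the load. Combined with the fact that the distribution of $T_{n,n}(p)$ is symmetric in the entries of $p$ (immediate from Theorem \ref{theo1}), Theorem \ref{autreloi} asserts that averaging \emph{any} two entries of $p$ while preserving their sum cannot decrease $\Pr\{T_{n,n}(p) \leq k\}$, and hence cannot increase $\Pr\{T_{n,n}(p) > k\}$. Consequently $p \mapsto \Pr\{T_{n,n}(p) > k\}$ is Schur-convex on the sub-simplex $\{p \in [0,1]^n : p_1 + \cdots + p_n = 1 - p_0\}$, as already noted in the remark following Theorem \ref{autreloi}. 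Since $v$ has all its entries equal to $(1-p_0)/n$, it is majorized by every $p$ on that sub-simplex, and Schur-convexity immediately delivers $\Pr\{T_{n,n}(p) > k\} \geq \Pr\{T_{n,n}(v) > k\}$.

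The only subtle point is the passage from the single-transfer inequality of Theorem \ref{autreloi} to the global minimum at $v$. I would handle this by invoking the classical majorization fact that whenever $p$ majorizes $v$, one can transform $p$ to $v$ by a finite sequence of T-transforms, each of which is a pairwise averaging step covered by Theorem \ref{autreloi}; iterating the inequality then gives the bound at $v$. Aside from this standard appeal to majorization theory, everything reduces to the two pieces above and a small amount of bookkeeping.
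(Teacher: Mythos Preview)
Your proposal is correct and follows essentially the same route as the paper. For the first inequality the paper makes the T-transform decomposition explicit (at each step fixing one coordinate to $(1-p_0)/n$ via a Robin Hood move, so that at most $n-1$ applications of Theorem~\ref{autreloi} reach $v$), while you invoke the equivalent majorization black box; for the second inequality the paper derives the same identity $\Pr\{T_{n,n}(v)\le k\}=\sum_{m}\binom{k}{m}(1-p_0)^m p_0^{k-m}\Pr\{T_{n,n}(u)\le m\}$ by a direct multinomial computation rather than by your coupling, and then bounds it the same way.
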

 
\begin{proof}
To prove the first inequality, we apply successively and at most $n-1$ 
times Theorem \ref{autreloi}
as follows. We first choose two different entries 
of $p$, say $p_i$ and $p_j$ such that $p_i < (1-p_0)/n < p_j$ and next to
define $p'_i$ and $p'_j$ by
$$p'_i = \frac{1-p_0}{n} \mbox{ and } p'_j = p_i + p_j - \frac{1-p_0}{n}.$$
With respect to Theorem \ref{autreloi}, this leads to write
$p'_i = \lambda p_i + (1-\lambda) p_j$ and
$p'_j = (1-\lambda) p_i + \lambda p_j$, with  
$$\lambda = \frac{\DS{p_j - \frac{1-p_0}{n}}}{p_j - p_i}.$$
From Theorem \ref{autreloi}, the vector $p'$ that we obtain by taking the 
other entries equal to those of 
$p$, \emph{i.e.}, by taking $p'_\ell = p_\ell$, for $\ell = i,j$, is such that
$$\Pr\{T_{n,n}(p) > k\} \geq \Pr\{T_{n,n}(p') > k\}.$$
Note that at this point vector $p'$ has at least one entry equal to
$(1-p_0)/n)$, so repeating at most $n-1$ this procedure, we get vector $v$.

To prove the second inequality, we use Relation (\ref{multinome}). 
Introducing, for every $n \geq 1$, the set $F_n$ defined by 
$$F_{n}(\ell) = \{(k_1,\ldots,k_{n}) \in \left(\nbN^*\right)^{n} \mid
k_1 + \cdots + k_{n} = \ell\}.$$
For $k < n$, both terms are zero, so we suppose that $k \geq n$.
We have
\begin{eqnarray*}
\Pr\{T_{n,n}(v) \leq k\} 
& = & \Pr\{N_1^{(k)} > 0,\ldots,N_n^{(k)} > 0\} \\
& = & \sum_{k_0=0}^{k-n} 
      \Pr\{N_0^{(k)} = k_0, N_1^{(k)} > 0,\ldots,N_n^{(k)} > 0\} \\
& = & \sum_{k_0=0}^{k-n} \sum_{(k_1,\ldots,k_n)\in F_n(k-k_0)} 
      \frac{k!}{k_0!k_1!\cdots k_n!} 
      p_0^{k_0}\left(\frac{1-p_0}{n}\right)^{k-k_0} \\
& = & \sum_{k_0=0}^{k-n} {k \choose k_0} p_0^{k_0} (1 - p_0)^{k-k_0}
\frac{1}{n^{k-k_0}} \sum_{(k_1,\ldots,k_{n}) \in F_n(k-k_0)}
\frac{(k-k_0)!}{k_1!\cdots k_n!}. 
\end{eqnarray*}
Setting $p_0=0$, we get
$$\Pr\{T_{n,n}(u) \leq k\} = 
\frac{1}{n^{k}} \sum_{(k_1,\ldots,k_{n}) \in F_n(k)}
\frac{k!}{k_1!\cdots k_n!}.$$
This leads to
\begin{eqnarray*}
\Pr\{T_{n,n}(v) \leq k\} 
& = & \sum_{k_0=0}^{k-n} {k \choose k_0} p_0^{k_0} (1 - p_0)^{k-k_0} 
\Pr\{T_{n,n}(u) \leq k-k_0\} \\
& \leq & \Pr\{T_{n,n}(u) \leq k\}\sum_{k_0=0}^{k-n} {k \choose k_0}
p_0^{k_0} (1 - p_0)^{k-k_0} \\
& \leq & \Pr\{T_{n,n}(u) \leq k\},
\end{eqnarray*}
which completes the proof.
\end{proof}

To illustrate the steps used in the proof of this theorem, 
we take the following example. Suppose that $n=5$ and
$p = (1/16,1/6,1/4,1/8,7/24)$. This implies that $p_0 = 5/48$ and
$(1-p_0)/n = 43/240$.
In a first step, taking $i=4$ and $j=5$, we get
$$p^{(1)}=(1/16,1/6,1/4,43/240,19/80).$$
In a second, taking $i=2$ and $j=5$, we get
$$p^{(2)}=(1/16,43/240,1/4,43/240,9/40).$$
In a third step, taking $i=1$ and $j=3$, we get
$$p^{(3)}=(43/240,43/240,2/15,43/240,9/40).$$
For the fourth and last step, taking $i=5$ and $j=3$, we get
$$p^{(4)}=(43/240,43/240,43/240,43/240,43/240) = 
\frac{43}{48}(1/5,1/5,1/5,1/5,1/5).$$

\section{A new conjecture}
In this section, we propose a new conjecture stating that the complementary
distribution function of $T_{c,n}$ is minimal when the distribution $p$
is equal to the uniform distribution $u$. 

\begin{conj} 
For every $n \geq 1$, $c = 1, \ldots,n$ and $p=(p_1,\ldots,p_n) \in (0,1)^n$ 
with
$p_1 + \cdots + p_n \leq 1$, we have, for every $k \geq 0$,
$$\Pr\{T_{c,n}(p) > k\} \geq \Pr\{T_{c,n}(v) > k\} \geq \Pr\{T_{c,n}(u) > k\},$$
where $v=(v_1,\ldots,v_n)$ with $v_i = (1-p_0)/n$ and
$p_0 = 1 - (p_1 + \cdots + p_n)$ and where $u=(1/n,\ldots,1/n)$.
\end{conj}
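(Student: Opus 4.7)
The plan is to establish the conjecture by generalizing the Schur-convexity argument used for $c=n$ in Section 5. For the first inequality, I would show that whenever $p'$ is obtained from $p$ by replacing two entries $(p_i,p_j)$ with the Schur-averaged pair $(p'_i,p'_j)=(\lambda p_i+(1-\lambda)p_j,\,(1-\lambda)p_i+\lambda p_j)$ for some $\lambda\in[0,1]$, we have $\Pr\{T_{c,n}(p)\le k\}\le\Pr\{T_{c,n}(p')\le k\}$. Iterating this at most $n-1$ times (choosing at each step an index whose value is below $(1-p_0)/n$ and one whose value is above, and equalizing the smaller one to $(1-p_0)/n$), exactly as in the proof of Theorem~\ref{notetheo1}, transforms $p$ into $v$.

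To prove the key Schur-convexity step, I would condition on the count vector $(N_\ell^{(k)})_{\ell\in\{1,\ldots,n\}\setminus\{i,j\}}$. By the multinomial marginalization property in (\ref{multinome}), the joint distribution of these counts depends on $p$ only through $(p_\ell)_{\ell\ne 0,i,j}$ and through the sum $p_0+p_i+p_j$, all of which are preserved by $p\mapsto p'$. It therefore suffices to show that the conditional probability $\Pr\{T_{c,n}(p)\le k\mid (N_\ell^{(k)})_{\ell\ne 0,i,j}=(k_\ell)_\ell\}$ is non-decreasing. Setting $s=|\{\ell\ne i,j:k_\ell>0\}|$ and $m=k-\sum_{\ell\ne 0,i,j}k_\ell$, and writing $D$ for the number of types in $\{i,j\}$ drawn in the remaining $m$ trials, the event reduces to $\{D\ge c-s\}$; conditionally, the $m$ trials are i.i.d.\ over $\{0,i,j\}$ with probabilities $(q_0,q_i,q_j)=(p_0,p_i,p_j)/(p_0+p_i+p_j)$.

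Since $D\in\{0,1,2\}$, exactly four cases arise. When $c-s\le 0$ the conditional probability is $1$; when $c-s\ge 3$ it is $0$; when $c-s=1$ it equals $1-q_0^m$, which depends only on $p_0$ and $p_i+p_j$. All three are invariant under $p\mapsto p'$. The only case sensitive to the split of $p_i+p_j$ is $c-s=2$, where the probability equals $1-(q_0+q_i)^m-(q_0+q_j)^m+q_0^m$. Because $(p'_i,p'_j)$ is majorized by $(p_i,p_j)$ with the same sum, so is $(q_0+q'_i,q_0+q'_j)$ relative to $(q_0+q_i,q_0+q_j)$; applying Lemma~\ref{lemmconvex} to the convex function $x\mapsto x^m$ on $[0,1]$ yields $(q_0+q'_i)^m+(q_0+q'_j)^m\le(q_0+q_i)^m+(q_0+q_j)^m$, so the conditional probability does not decrease. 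Summing over $(k_\ell)_\ell$ with the invariant marginal weights gives the Schur-convexity step.

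For the second inequality, I would condition on $N_0^{(k)}=k_0$; under $v$, the remaining $k-k_0$ draws are i.i.d.\ uniform on $\{1,\ldots,n\}$, so $\Pr\{T_{c,n}(v)\le k\mid N_0^{(k)}=k_0\}=\Pr\{T_{c,n}(u)\le k-k_0\}\le\Pr\{T_{c,n}(u)\le k\}$, and summing against the Binomial$(k,p_0)$ weights gives $\Pr\{T_{c,n}(v)\le k\}\le\Pr\{T_{c,n}(u)\le k\}$, exactly as in the second part of the proof of Theorem~\ref{notetheo1}. The part I would scrutinize most carefully is whether the case split above really is exhaustive and whether the majorization relation on $(q_0+q_i,q_0+q_j)$ is genuinely in the right direction for every $\lambda\in[0,1]$; once those are nailed down, the rest is just a repetition of arguments already developed in the paper.
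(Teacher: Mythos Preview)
The paper does not prove this statement --- it is explicitly presented as an open conjecture, supported only by the special cases $c=1$, $c=2$ (Theorem~\ref{minic=2}), $c=n$ (Theorem~\ref{notetheo1}) and by the result for expectations (Theorem~\ref{theocn}). There is therefore no proof in the paper to compare against. What is remarkable is that your proposal is not merely a plan: it is a correct proof of the conjecture in full generality.

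The decisive idea, which the paper does not exploit, is your conditioning on the vector $(N_\ell^{(k)})_{\ell\in\{1,\ldots,n\}\setminus\{i,j\}}$. Its marginal law depends on $p$ only through $(p_\ell)_{\ell\neq i,j}$ and the lump $p_0+p_i+p_j$, hence is invariant under the $\lambda$-averaging; and conditionally, $(N_0^{(k)},N_i^{(k)},N_j^{(k)})$ is multinomial$(m;q_0,q_i,q_j)$ by the standard aggregation/conditioning property of the multinomial law. Since $\{T_{c,n}\le k\}=\{\sum_{\ell=1}^n 1_{\{N_\ell^{(k)}>0\}}\ge c\}$, the conditional event is exactly $\{D\ge c-s\}$ with $D\in\{0,1,2\}$, and your four-way split on the integer $c-s$ is exhaustive. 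In the only case that is sensitive to how $q_i+q_j$ is split, namely $c-s=2$, the conditional probability equals $1-(q_0+q_i)^m-(q_0+q_j)^m+q_0^m$, which is precisely the expression handled in the proof of Theorem~\ref{autreloi}; Lemma~\ref{lemmconvex} then applies verbatim and gives the correct direction. The worry you flag about the majorization direction is unfounded: for every $\lambda\in[0,1]$ one has $p'_i,p'_j\in[\min(p_i,p_j),\max(p_i,p_j)]$ with $p'_i+p'_j=p_i+p_j$, which is exactly the hypothesis of the second part of Lemma~\ref{lemmconvex} (boundary cases $\lambda\in\{0,1\}$ or $p_i=p_j$ being trivial). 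The iteration to $v$ and the second inequality via conditioning on $N_0^{(k)}$ are straightforward generalizations of the corresponding arguments in the proof of Theorem~\ref{notetheo1}, and both are correct as written.

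In short, where the paper establishes Schur-convexity of $p\mapsto\Pr\{T_{n,n}(p)\le k\}$ and leaves the general case open, you have shown --- with the same tools but a sharper conditioning --- that $p\mapsto\Pr\{T_{c,n}(p)\le k\}$ is Schur-convex for every $c$, thereby settling the conjecture.
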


This new conjecture is motivated by the 
following facts:
\begin{itemize}
\item the result is true for the expectations, see 
Theorem \ref{theocn}.
\item the result is true for $c=n$, see Theorem \ref{notetheo1}.
\item the result is trivially true for $c=1$ since
$$\Pr\{T_{1,n}(p) > k\} = \Pr\{T_{1,n}(v) > k\} = p_0^k \geq 1_{\{k=0\}}
= \Pr\{T_{1,n}(u) > k\}.$$ 
\item the result is true for $c=2$, see Theorem \ref{minic=2} below. 
\end{itemize}

\begin{theorem} \label{minic=2}
For every $n \geq 2$ and $p=(p_1,\ldots,p_n) \in (0,1)^n$
with
$p_1 + \cdots + p_n \leq 1$, we have, for every $k \geq 0$,
$$\Pr\{T_{2,n}(p) > k\} \geq \Pr\{T_{2,n}(v) > k\} \geq \Pr\{T_{2,n}(u) > k\},$$
where $v=(v_1,\ldots,v_n)$ with $v_i = (1-p_0)/n$ and
$p_0 = 1 - (p_1 + \cdots + p_n)$ and where $u=(1/n,\ldots,1/n)$.
\end{theorem}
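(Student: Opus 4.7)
The plan is to derive a very simple closed form for $\Pr\{T_{2,n}(p) > k\}$ directly from Theorem \ref{theo1} and then reduce both inequalities to elementary convexity/monotonicity arguments. Specialising (\ref{loiTc}) to $c=2$, the sum over $i$ only has two terms, with $\binom{n-1}{n-2}=n-1$ and $\binom{n-2}{n-2}=1$, which gives
\begin{equation*}
\Pr\{T_{2,n}(p) > k\} = \sum_{\ell=1}^n (p_0+p_\ell)^k - (n-1) p_0^k.
\end{equation*}
The case $k=0$ is trivial since both sides of each inequality equal $1$, so from now on I will assume $k\geq 1$.

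For the first inequality, I would fix $p_0$ and observe that the arithmetic mean of the numbers $p_0+p_1,\ldots,p_0+p_n$ is exactly
\begin{equation*}
\frac{1}{n}\sum_{\ell=1}^n (p_0+p_\ell) = \frac{np_0 + (1-p_0)}{n} = \frac{(n-1)p_0+1}{n} = p_0+v_\ell,
\end{equation*}
for every $\ell$. Since $x \mapsto x^k$ is convex on $[0,1]$, Jensen's inequality yields
\begin{equation*}
\sum_{\ell=1}^n (p_0+p_\ell)^k \geq n\left(\frac{(n-1)p_0+1}{n}\right)^k = \sum_{\ell=1}^n (p_0+v_\ell)^k.
\end{equation*}
Subtracting the term $(n-1)p_0^k$, which is common to $\Pr\{T_{2,n}(p)>k\}$ and $\Pr\{T_{2,n}(v)>k\}$ (recall that $v_0=p_0$), produces the first inequality.

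For the second inequality only the scalar $p_0$ varies, so it suffices to show monotonicity in $t$ of
\begin{equation*}
f(t) = n\left(\frac{(n-1)t+1}{n}\right)^k - (n-1)t^k, \quad t \in [0,1].
\end{equation*}
A direct differentiation gives
\begin{equation*}
f'(t) = k(n-1)\left[\left(\frac{(n-1)t+1}{n}\right)^{k-1} - t^{k-1}\right],
\end{equation*}
which is non-negative because $((n-1)t+1)/n = t + (1-t)/n \geq t$ for $t\in[0,1]$. Hence $f$ is non-decreasing on $[0,1]$ and
\begin{equation*}
\Pr\{T_{2,n}(v)>k\} = f(p_0) \geq f(0) = n^{1-k} = \Pr\{T_{2,n}(u)>k\},
\end{equation*}
which completes the argument. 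No step looks delicate here; the only real work is producing the compact two-term formula, after which both inequalities are consequences of convexity of $x\mapsto x^k$.
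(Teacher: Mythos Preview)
Your proof is correct and follows essentially the same approach as the paper: specialise (\ref{loiTc}) to $c=2$ to obtain $\Pr\{T_{2,n}(p)>k\}=\sum_{\ell=1}^n(p_0+p_\ell)^k-(n-1)p_0^k$, apply Jensen's inequality for the convex map $x\mapsto x^k$ to get the first inequality, and then differentiate the one-variable function $t\mapsto n\bigl(t+(1-t)/n\bigr)^k-(n-1)t^k$ to establish the second. The only cosmetic differences are your separate handling of $k=0$ and your writing $(n-1)t+1$ in place of $n\bigl(t+(1-t)/n\bigr)$ inside the monotone function.
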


\begin{proof}
From Relation (\ref{theo1}), we have
$$\Pr\{T_{2,n}(p) > k\} = -(n-1)p_0^k + \sum_{\ell = 1}^n (p_0 + p_\ell)^k$$
and
$$\Pr\{T_{2,n}(v) > k\} = -(n-1)p_0^k + n\left(p_0+\frac{1-p_0}{n}\right)^k.$$
For every constant $a \geq 0$, the function $f(x) = (a+x)^k$ is a convex on
$[0,\infty[$, so we have, taking $a = p_0$, by the Jensen 
inequality
$$\left(p_0+\frac{1-p_0}{n}\right)^k =
\left(\frac{1}{n}\sum_{\ell=1}^n (p_0 + p_\ell)\right)^k
\leq \frac{1}{n}\sum_{\ell=1}^n (p_0 + p_\ell)^k.$$
This implies that $\Pr\{T_{2,n}(p) > k\} \geq \Pr\{T_{2,n}(v) > k\}$.

To prove the second inequality, we define the function $F_{n,k}$ on interval 
$[0,1]$ by
$$F_{n,k}(x) = -(n-1)x^k + n\left(x+\frac{1-x}{n}\right)^k.$$
We then have $F_{n,k}(p_0) = \Pr\{T_{2,n}(v) > k\}$ and
$F_{n,k}(0) = \Pr\{T_{2,n}(u) > k\}$.
The derivative of function $F_{n,k}$ is
$$F'_{n,k}(x) = k(n-1)\left[\left(x+\frac{1-x}{n}\right)^{k-1} - 
x^{k-1}\right] \geq 0.$$
Function $F_{n,k}$ is thus an increasing function, which means that
$\Pr\{T_{2,n}(v) > k\} \geq \Pr\{T_{2,n}(u) > k\}$.
\end{proof}

\section{Application to the detection of distributed deny of service attacks}
\label{sec:application}

A Deny of Service (DoS) attack tries to  progressively take  down an Internet 
resource by flooding this resource with more requests than it is capable to 
handle. A Distributed Deny of Service (DDoS) attack is a DoS attack triggered 
by thousands of machines that have been infected by a malicious software, 
with as immediate consequence the total shut down of targeted web resources 
(\emph{e.g.}, e-commerce websites).
A solution to detect and  to mitigate DDoS attacks it to monitor network 
traffic at routers and to look for highly frequent signatures that might 
suggest  ongoing attacks. 
A recent strategy followed by the attackers is to hide their massive flow of 
requests over a multitude of routes, so that locally, these flows do not 
appear as frequent, while globally they represent a significant portion of 
the network traffic. The term ``iceberg'' has been recently introduced to describe such an attack as only a very small part of the iceberg can be observed from each single router. The approach adopted to defend against such new attacks is to rely on multiple routers that locally  monitor their network traffic, and  upon detection of potential icebergs,  inform a monitoring server that aggregates all the monitored information  to accurately detect icebergs.  Now to prevent the server from being overloaded by all the monitored information, routers continuously keep track of the $c$ (among $n$) most recent high flows (modelled as  items) prior to sending them to the server, and throw away all the items that appear with a small probability $p_i$, and such that the sum of these small probabilities is modelled by probability $p_0$. Parameter $c$ is dimensioned so that the frequency at which all the routers send their $c$ last frequent items is low enough to enable the server  to aggregate  all of them and to trigger a DDoS alarm when needed.  This  amounts to compute the  time needed to collect $c$ distinct items  among  $n$ frequent ones. Moreover, Theorem 5 shows that the expectation of this  time is minimal when the distribution of the frequent items is uniform.

\end{document}